\documentclass[12pt]{article}
\usepackage{amsmath,amssymb}
\usepackage{color,graphicx}
\usepackage{ulem}

\newcommand{\bu}{\boldsymbol u}

\newtheorem{Theorem}{Theorem}

\newtheorem{lema}{Lemma}

\newcounter{remark}
\def\theremark {\arabic{remark}}
\newenvironment{remark}{\refstepcounter{remark}\par\noindent{\bf Remark\ \theremark}\ }{\par}
\newtheorem{Proof}{Proof}

\newenvironment{proof}{\begin{Proof}\rm}{\hfill $\Box$ \end{Proof}}

\usepackage{hyperref}
\usepackage[all]{hypcap}

\title{Optimal bounds for numerical approximations of finite horizon problems based on dynamic programming approach}

\author{Javier de Frutos\thanks{Instituto de Investigaci\'on en Matem\'aticas (IMUVA), Universidad
de Valladolid, Spain. Research supported
by Spanish MINECO
under grant PID2022-136550ND-I00 (frutos@mac.uva.es)}
  \and Julia Novo\thanks{Departamento de
Matem\'aticas, Universidad Aut\'onoma de Madrid, Spain.  Research supported
by Spanish MINECO
under grant PID2022-136550ND-I00. (julia.novo@uam.es)}}
\date{\today}
\begin{document}
\maketitle
\abstract{In this paper we provide optimal bounds for fully discrete approximations to finite horizon problems via  dynamic programming. We adapt the error analysis in \cite{nos} for the infinite horizon case to the
 finite horizon case. We prove an a priori bound of size $O(h+k)$ for the method, $h$ being the time discretization step and $k$ the spatial mesh size. Arguing with piecewise constants controls we are able to obtain first order of convergence in time and space under standard regularity assumptions, avoiding the more restrictive regularity assumptions on the controls required in \cite{nos}. 
 We show that the loss in the rate of convergence in time of the
 infinite case (obtained arguing with piece-wise controls)
   can be avoided in the finite horizon case}
\bigskip

{\bf Key words} Finite horizon problems, Dynamic programming, Hamilton-Jacobi-Bellman equation, Optimal control.

\section{Introduction}
The numerical approximation of optimal control problems is of importance for many applications. In this paper we consider the dynamic programming  approach for solving finite horizon problems.  The dynamic programming principle gives a characterization of the value function as the unique viscosity solution of a nonlinear partial differential equation, the Hamilton-Jacobi-Bellman (HJB) equation. The value function is then used to get a synthesis of a feedback control law.  

In the present paper our concern is to give optimal error bounds for a fully discrete semi-Lagrangian method approaching the value function. For a method with a positive time step size $h$ and spatial elements of size $k$ we prove an optimal error bound of size $O(h+k)$ which gives first order of convergence in time and space for the method. We introduce a characterization of the fully discrete method inherited from \cite{nos}, see also \cite{nos_cor}. The temporal component of the error comes from the approximation of the dynamics by a discrete one based on the Euler method plus the approximation of the time integral by the composite rectangle rule. The spatial component of the error comes from the substitution of the functions, both in the dynamics and in the cost, by  piece-wise linear interpolants in space. Adapting the technique in \cite[Section 3.2]{nos}, based on piece-wise constant controls in time, we avoid making regularity assumptions on the controls. We only need a kind of discrete regularity assumption on the computed discrete controls to achieve the full order of convergence, see Remark 1. 

We think that the error analysis techniques shown in this paper are of interest to analyze similar methods applied to the same or analogous problems. To our knowledge, there are not many papers getting error estimates for methods solving finite horizon control problems.  In \cite{luca_et_al},
a  dynamic programming algorithm based on a tree structure (which does not require a spatial discretization of the problem) to mitigate the curse of dimensionality, see \cite{Alla_Fal_Luca}, is analyzed. First order bounds in time are obtained in the first part of this paper. In the second part, it is assumed that the continuous set of controls is replaced by a discrete set. The tree structure considers only spatial nodes that result of the discrete dynamics. To reduce the increase in the total number of nodes  a pruning criterion is applied in \cite{Alla_Fal_Luca} that replace a new node by and old one whenever the distance between them is small enough. In \cite{Alla_Fal_Luca} the pruning condition is too demanding since the difference between nodes is taken $O(h^2)$, $h$ being the time step. This fact comes from a factor $h$ that appears dividing in the error bounds. This problem can be solved with the error analysis presented in the present paper. The same problem was fixed with the error analysis of \cite{nos} in the infinite horizon case. Reference \cite{nos} is the first one  in which  a rate of convergence $O(h+k)$ is proved, improving the rate of convergence of size $O(k/h)$ shown in the literature, see \cite[Corollary 2.4]{falcone1}, \cite[Theorem 1.3]{falcone2}). 
 
Although the method analyzed in this paper does not avoid the curse of dimensionality one can apply reduced order techniques  to this method following \cite{nos_pod_control}. In \cite{nos_pod_control} a reduced order method based on proper orthogonal decomposition (POD) is applied for the numerical approximation of infinite horizon optimal control problems. The same ideas extend to finite horizon problems and will be subject of future research. 
For the error analysis of a reduced order method the error analysis of the present paper is essential.
Moreover, as stated before, the error analysis shown in this paper can be used to analyze or improve the analysis of   numerical methods for the same problem.

The outline of the paper is as follows. In Section 2 we introduce some notation. The fully discrete approximation is described in Section 3. In Section 4 we carry out the error analysis of the method. Finally, some interpolation arguments needed for the proof of the main theorem are included in the appendix.

\section{Model problem and preliminary results}
Throughout this section we follow the notation in \cite{Alla_Fal_Luca}.  Let us consider the system
\begin{eqnarray}\label{sys}
\dot y(s)&=&f(y(s),u(s),s),\quad s\in(t,T],\\
y(t)&=&x\in {\Bbb R}^d.\nonumber
\end{eqnarray}
We will denote by $y:[t,T]\rightarrow {\Bbb R}^d$ the solution, by $u$ the control $u:[t,T]\rightarrow {\Bbb R}^m$, by $f:{\Bbb R}^d\times {\Bbb R}^m\rightarrow {\Bbb R}^d$ the dynamics, and  by 
$$
{\cal U}=\left\{u:[t,T]\rightarrow U,\ {\rm measurable}\right\}
$$
the set of admissible controls where $U\subset {\Bbb R}^m$ is a compact set. We assume that there exists a unique solution for \eqref{sys} for each $u\in {\cal U}$.

The cost functional for the finite horizon optimal control problem will be given by
\begin{eqnarray}\label{cost}
J_{x,t}(u):=\int_t^T L(y(s,u),u(s),s)e^{-\lambda(s-t)}ds+g(y(T))e^{-\lambda(T-t)},
\end{eqnarray}
where $L: {\Bbb R}^d\times {\Bbb R}^m\times [t,T]\rightarrow {\Bbb R}$ is the running cost, $g:{\Bbb R}^d\rightarrow {\Bbb R}$ is the final cost, and $\lambda\ge 0$ is the discount factor.

The goal is to find a state-feedback control law $u(t)=\Phi(y(t),t)$, in terms of the state variable $y(t)$, where
$\Phi$ is the feedback map. To derive optimality conditions, dynamic programming principle (DPP) is used. The value
function for an initial condition is defined by
\begin{equation}\label{value}
v(x,t):=\inf_{u\in {\cal U}}J_{x,t}(u).
\end{equation}
The value function \eqref{value} satisfies the HHB equation for every $x\in {\Bbb R}^d$, $s\in[t,T)$:
\begin{eqnarray}\label{HJB}
&&-\frac{\partial v}{\partial s}(x,s)+\lambda v(x,s)+\max_{u\in U}\left\{-L(x,u,s)-\nabla v(x,s)\cdot f(x,u,s)\right\}=0,\\
&&v(x,T)=g(x).\nonumber
\end{eqnarray}
If the value function is known, then it  is possible to compute the optimal feedback control as
\begin{equation}\label{control}
u^*(t):={\rm arg \ max}_{u\in U}\left\{-L(x,u,t)-\nabla v(x,t)\cdot f(x,u,t)\right\}.
\end{equation}
Equation \eqref{HJB} is hard to solve analytically.  In next section we introduce a semi-Lagrangian method to approach the value function.

As in \cite{luca_et_al}, we assume that the functions $f$, $L$ and $g$ are continuous in all the variables and bounded:
\begin{eqnarray}\label{conti_f}
&&\|f(x,u,s)\|_\infty=\max_{1\le i\le n}|f_i(x,u,s)|\le M_f\\
&& |L(x,u,s)|\le M_L,\ |g(x)|\le M_g,\quad \forall x\in {\Bbb R}^d, u\in U,\ s\in[t,T].\label{conti_g}
\end{eqnarray}
We also assume that $f$ and $L$ are Lipschitz-continuous with respect to all the arguments:
\begin{eqnarray}\label{lip_f_1}
&&\|f(x,u,s)-f(y,u,s)\|_2\le L_f \|x-y\|_2,\quad \forall x,y\in {\Bbb R}^d, u\in U, s\in[t,T]\\
\label{lip_f_2}
&&\|f(x,u,s_1)-f(x,u,s_2)\|_2\le L_f |s_1-s_2|,\quad \forall x\in {\Bbb R}^d, u\in U, s_1,s_2\in[t,T],
\\
\label{lip_fu}
&&\|f(x,u_1,s)-f(x,u_2,s)\|_2\le L_f \|u_1-u_2\|_2,\quad \forall x\in {\Bbb R}^d, u_1,u_2\in U, s\in[t,T].
\end{eqnarray}
\begin{eqnarray}\label{lip_L_1}
&&|L(x,u,s)-L(y,u,s)\|_2\le L_L \|x-y\|_2,\quad \forall x,y\in {\Bbb R}^d, u\in U, s\in[t,T],\\
\label{lip_L_2}
&&|L(x,u,s_1)-L(x, u,s_2)\|_2\le L_L |s_1-s_2|,\quad \forall x\in {\Bbb R}^d, u\in U, s_1,s_2\in[t,T],\quad
\\
\label{lip_L_u}
&&|L(x,u_1,s)-L(x, u_2,s)\|_2\le L_L \|u_1-u_2\|_2,\quad \forall x\in {\Bbb R}^d, u_1,u_2\in U, s\in[t,T].
\end{eqnarray}
Finally, we assume that the cost $g$ is also Lipschitz-continuous:
\begin{equation}\label{lip_g}
|g(x)-g(y)|\le L_g \|x-y\|_2,\quad \forall x,y\in {\Bbb R}^d.
\end{equation}
\section{Fully discrete approximation}
Let us define $h=(T-t)/N$, where $N$ is the number of temporal steps. 
Let $\Omega$ a bounded polyhedron in $\Bbb R^d$ satisfying the following inward pointing condition on the dynamics for sufficiently small $h>0$
\begin{equation}\label{invariance}
y+hf(y,u)\in \overline \Omega,\quad \forall y \in \overline \Omega, u\in U.
\end{equation}
Let $\left\{S_j\right\}_{j=1}^{m_s}$ be a family of simplices which defines a regular triangulation of $\Omega$
$$
\overline \Omega=\bigcup_{j=1}^{m_s} S_j,\quad k=\max_{1\le j\le m_s}({\rm diam} \ S_j).
$$
We assume we have $n_s$ vertices/nodes $x_1,\ldots,x_{n_s}$ in the triangulation. Let $V^k$ be the space of piecewise affine functions from $\overline \Omega$ to ${\Bbb R}$ which are continuous in $\overline \Omega$ having constant gradients in the interior of any simplex $S_j$ of the triangulation. Then, a fully discrete scheme for the HJB equations is given by:
\begin{eqnarray}\label{fully_discrete}
&&v_{h,k}^n(x_i)=\min_{u\in U}\left\{hL(x_i,u,t_n)+\delta_hv_{h,k}^{n+1}(x_i+hf(x_i,u,t_n))\right\},\ n=N-1,\ldots,0\nonumber\\
&&v_{h,k}^{ N}(x_i)=g(x_i),\quad i=1,\ldots,n_s,
\end{eqnarray}
where $\delta_h=1-\lambda h$, $t_n=t+nh$. The functions $v_{h,k}^n$ are in $V^k$ and are defined by their values at the nodes $x_i$, $i=1,\ldots,n_s$ given by \eqref{fully_discrete}.

As in \cite{nos}, see also \cite{nos_cor}, we will give a characterization of the fully discrete method that allows to bound the error.

For any $x\in {\Bbb R^d}$, $n=0,\ldots N-1$ and $\bu^l_n=\left\{u_n^{l,n},\ldots,u_n^{l,N-1}\right\}$ with all its components $u_n^{l,k}\in U$, $n\le k\le N-1$, for $1\le l\le n_s$, let us define the fully discrete functional
\begin{eqnarray}\label{eq:funcional_fd}
J_{h,k}^n(x,\bu^1_n,\ldots,\bu^{n_s}_n)&:=&h\sum_{j=n}^{N-1} \delta_h^{j-n}I_k L(\hat y_j,u_n^{1,j},\ldots,u_n^{n_s,j},t_j)
+I_kg(\hat y_{N})e^{-\lambda(T-t_n)}, \ \\
\hat y_{j+1}&=&\hat y_j+h I_k f(\hat y_j,u_n^{1,j},\ldots,u_n^{n_s,j},t_j),\ \hat y_n=x,\ j=n,\ldots {N}-1,\quad\label{eq:eulerb}
\end{eqnarray}
where, using the barycentric coordinates, 
$$
x=\sum_{j=1}^{n_s}\mu_j(x) x_j, \ 0\le \mu_j(x)\le 1,\ \sum_{j=1}^{n_s}\mu_j(x)=1,
$$
for any $x$, any $t$ and any $u^1,\ldots, u^{n_s}\in U$ 
the interpolants $I_k L(x,u^{1},\ldots,u^{n_s},t)$ and  $I_k f(x,u^{1},\ldots,u^{n_s},t)$ are defined by
\begin{eqnarray}\label{interb}
&&I_k L(x,u^1,\ldots,u^{n_s},t)=\sum_{j=1}^{n_s}\mu_j(x) L(x_j,u^j,t),\\
&&I_k f(x,u^1,\ldots,u^{n_s},t)=\sum_{j=1}^{n_s}\mu_j(x) f(x_j,u^j,t),\nonumber
\end{eqnarray}
and $I_k g$ is the piecewise linear interpolant of $g$ in $V^k$.

Now, as in \cite{nos_cor}, for any $n=0,\ldots,N-1$, we define $w_{h,k}^n\in V^k$ by
\begin{eqnarray}\label{eq:cha}
w_{h,k}^n(x)=\inf_{\bu^1_n,\ldots, \bu^{n_s}_n}  J_{h,k}^n(x,\bu^1_n,\ldots,\bu^{n_s}_n).
\end{eqnarray}

Following \cite[Theorem 1]{nos_cor} it follows that
\begin{Theorem}
For any $x\in \overline \Omega$, $n=0,\ldots,N$, the function $w_{h,k}^n(x)$ satifies the equation

$$w_{h,k}^n(x)=\inf_{u_n^1,\dots,u_n^{n_s}}\{hI_kL(x,u_n^1,\dots,u_n^{n_s},t_n)+\delta_h w_{h,k}^{n+1}(x+hI_kf(x,u_n^1,\dots,u_n^{n_s},t_n))\}$$
and, as a consequence, for each node $x_i$, $i=1,\dots,n_s$

$$w_{h,k}^n(x_i)=\inf_{u_n^{i}}\left\{hL(x_i,u^{i}_n,t_n)+\delta_h w_{h,k}^{n+1}(x_i+hf(x_i,u^{i}_n,t_n))\right\}.$$
\end{Theorem}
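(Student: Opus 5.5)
The first identity is a discrete dynamic programming principle for the functional $J_{h,k}^n$, and I would prove it by splitting off the first step. Fix $x\in\overline\Omega$ and $n\le N-1$, with the convention $w_{h,k}^N=I_kg$.

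\textbf{The splitting identity.} Write the control families $\bu^l_n=\{u_n^{l,n}\}\cup\bu^l_{n+1}$, where $\bu^l_{n+1}=\{u_n^{l,n+1},\ldots,u_n^{l,N-1}\}$. Set $y^+=x+hI_kf(x,u_n^{1,n},\ldots,u_n^{n_s,n},t_n)$. By \eqref{eq:eulerb}, $\hat y_{n+1}=y^+$, and the remaining trajectory $\hat y_{n+1},\ldots,\hat y_N$ is exactly the trajectory defining $J^{n+1}_{h,k}(y^+,\bu^1_{n+1},\ldots,\bu^{n_s}_{n+1})$. Factoring $\delta_h$ from the sum, and using the consistent convention $e^{-\lambda(T-t_n)}\approx\delta_h^{N-n}$ for the terminal weight (the scaling used in \cite{nos_cor}), gives
$$
J^n_{h,k}(x,\bu_n)=hI_kL(x,u_n^{1,n},\ldots,u_n^{n_s,n},t_n)+\delta_h J^{n+1}_{h,k}(y^+,\bu_{n+1}).
$$
Note that $y^+\in\overline\Omega$. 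Indeed, it is a convex combination $\sum_j\mu_j(x)\,(x_j+hf(x_j,u^j,t_n))$, and each term lies in $\overline\Omega$ by \eqref{invariance}; convexity of $\overline\Omega$, or of the simplex containing $x$, may be needed here, and this should be checked. So the trajectory stays in the domain where barycentric coordinates are defined.

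\textbf{The two inequalities.} Taking the infimum over the tail controls $\bu_{n+1}$ first, with the first-step controls fixed, gives $\ge$. This works because the first-step controls and the tail controls are independent variables and $\delta_h\ge0$ for small $h$. For $\le$, take an $\varepsilon$-optimal tail for $y^+$, concatenate it with any first-step choice, and let $\varepsilon\to0$. The main technical care is this independence: the tail infimum defining $w^{n+1}_{h,k}(y^+)$ is over arbitrary families, so concatenation is admissible. Also, $\delta_h\ge0$ must hold so that infima commute with multiplication.

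\textbf{The node identity.} At $x=x_i$ we have $\mu_j(x_i)=\delta_{ij}$. Hence $I_kL(x_i,u^1,\ldots,u^{n_s},t_n)=L(x_i,u^i,t_n)$ and $I_kf(x_i,\ldots)=f(x_i,u^i,t_n)$, and the other $u^j$ do not appear. The infimum over $(u^1,\ldots,u^{n_s})$ therefore reduces to an infimum over $u^i$ alone.

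One further point should be remarked. The statement asserts $w_{h,k}^n\in V^k$, but for general $x$ the recursion is a pointwise identity. The node identity shows that the nodal values agree with the scheme \eqref{fully_discrete}. If needed, piecewise linearity follows from the recursion: for $x$ in a simplex, the objective is affine in $\mu(x)$ when the controls $u^j$ are chosen separately per vertex. This is the delicate part, because $w^{n+1}$ evaluated at a convex combination is not obviously affine. I would treat it as given by the cited result \cite[Theorem 1]{nos_cor} rather than reprove it.
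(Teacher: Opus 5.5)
Your argument (peel off the first step, pull the tail infimum inside using $\delta_h\ge 0$ and the independence of the tail controls, then use $\mu_j(x_i)=\delta_{ij}$ at the nodes) is the standard discrete dynamic programming proof that the paper imports from \cite[Theorem 1]{nos_cor}, so the proposal is correct and follows the same route. The one point to tighten is that the terminal weight must be \emph{redefined} as $\delta_h^{N-n}$, not approximated by it: with the paper's literal weight $e^{-\lambda(T-t_n)}$ and $\delta_h=1-\lambda h$ the identity is false for $\lambda>0$ (take $L\equiv 0$, $f\equiv 0$, $g\equiv 1$, so $w_{h,k}^n=e^{-\lambda(T-t_n)}\neq \delta_h e^{-\lambda(T-t_{n+1})}$), whereas $\delta_h^{N-n}$ is exactly the weighting the scheme \eqref{fully_discrete} produces; separately, drop the closing suggestion that $w_{h,k}^n\in V^k$ follows from the recursion, since it is not part of this statement and does not hold in general.
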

Following \cite[Theorem 2]{nos_cor} we also have the following characterization
\begin{Theorem}\label{th:acla}
For each node $x_i$, $i=1,\dots,n_s,$
let us denote by $u^{i}_n$ the argument giving the minimum in
$$w_{h,k}^n(x_i)=\inf_{u_n^i}\left\{hL(x_i,u^{i}_n,t_n)+\delta_h w_{h,k}^{n+1}(x_i+hf(x_i,u^{i}_n,t_n))\right\}.$$
Then, for any $x\in \overline \Omega$ the subsets of controls $\bu_n^i$, $i=1,\ldots,n_s$ giving the minimum in
$$w_{h,k}^n(x)=\inf_{\bu_n^1,\ldots, \bu_n^{n_s}}   J_{h,k}^n(x,\bu^1_n,\ldots,\bu^{n_s}_n)$$
are determined by the values of the controls at the nodes, $u^i_n$, $1\le i\le n_s$. More precisely, for any $j$, the values $\left\{u_n^{1,j},\ldots,u_n^{n_s,j}\right\}$  in \eqref{eq:funcional_fd}-\eqref{eq:eulerb}, $n\le j\le N-1$, are $u_n^{i,j}=u^i_n$ if $\mu_i(\hat y_j)\neq 0$ and $u_n^{i,j}=0$ if $\mu_i(\hat y_j)=0.$ 
\end{Theorem}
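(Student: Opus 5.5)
We argue by backward induction on $n$, starting from $n=N-1$ and relying on the first theorem above, i.e.\ the dynamic programming identity for $w_{h,k}^n$ at an arbitrary $x\in\overline\Omega$. The guiding fact is that the functional $J_{h,k}^n$ depends on the node controls only through the combinations $\mu_i(\hat y_j)L(x_i,u_n^{i,j},t_j)$ and $\mu_i(\hat y_j)f(x_i,u_n^{i,j},t_j)$. Hence the value of $u_n^{i,j}$ is irrelevant whenever $\mu_i(\hat y_j)=0$, and setting it to $0$ is only a convention. When $\mu_i(\hat y_j)\neq 0$, the choice of $u_n^{i,j}$ must be shown to coincide with the nodal minimizer at node $x_i$ and time level $j$. (The statement writes $u^i_n$; the natural reading is the minimizer $u^i_j$ at level $t_j$, and we prove it in that form.)

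For a fixed time level $n$, the key step is the one-step decoupling. For $x=\sum_i\mu_i(x)x_i$ we would show that
$$
hI_kL(x,u^1,\dots,u^{n_s},t_n)+\delta_h w_{h,k}^{n+1}\bigl(x+hI_kf(x,u^1,\dots,u^{n_s},t_n)\bigr)
$$
is minimized by taking each $u^i$ equal to the nodal minimizer $u^i_n$. Since $x+hI_kf=\sum_i\mu_i(x)\bigl(x_i+hf(x_i,u^i,t_n)\bigr)$, the expression splits node by node wherever $w_{h,k}^{n+1}$ is affine. We would therefore combine the nodal characterization $w_{h,k}^n(x_i)=\min_{u}\{hL(x_i,u,t_n)+\delta_h w^{n+1}_{h,k}(x_i+hf(x_i,u,t_n))\}$ with the fact that $w_{h,k}^n\in V^k$, so $w_{h,k}^n(x)=\sum_i\mu_i(x)w_{h,k}^n(x_i)$. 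We then need two inequalities. The first is ``$\le$'': choosing $u^i=u^i_n$ and using the interpolant structure of $J^n_{h,k}$, following \cite[Theorem 2]{nos_cor}, attains exactly $\sum_i\mu_i(x)\bigl[hL(x_i,u^i_n,t_n)+\delta_h w^{n+1}_{h,k}(x_i+hf(x_i,u_n^i,t_n))\bigr]$. The second is ``$\ge$'': any other choice cannot do better at any node with $\mu_i(x)>0$, by minimality of $u^i_n$ at that node. Together these give that the minimizing first-step controls are $u_n^{i,n}=u^i_n$ on the support of $\mu(x)$.

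Next we iterate along the discrete trajectory. Having fixed the step-$n$ controls, $\hat y_{n+1}$ is determined, and the remaining part of $J^n_{h,k}$ equals $\delta_h$ times $J^{n+1}_{h,k}(\hat y_{n+1},\cdot)$, up to the reindexing of the discount factors $e^{-\lambda(T-t_n)}$ against $\delta_h$ as in \cite{nos_cor}. Its infimum is $w^{n+1}_{h,k}(\hat y_{n+1})$. Applying the induction hypothesis at level $n+1$ with $x=\hat y_{n+1}$ then fixes the controls $u_n^{i,j}$ for $j\ge n+1$ as the nodal minimizers at level $j$ on the support of $\mu(\hat y_j)$, and as arbitrary (set to $0$) elsewhere.

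The main obstacle is the ``$\ge$'' direction of the one-step decoupling. $w_{h,k}^{n+1}$ is only piecewise affine, so $w^{n+1}_{h,k}\bigl(\sum_i\mu_i z_i\bigr)$ is not in general equal to $\sum_i\mu_i w^{n+1}_{h,k}(z_i)$. We would handle this as \cite{nos_cor} does, by working with the functional $J^n_{h,k}$ itself rather than with $w$. Because $J^n_{h,k}$ is built from the interpolants $I_kL$, $I_kf$, $I_kg$, which are linear in the barycentric weights, the infimum over the independent families $\bu^1_n,\dots,\bu^{n_s}_n$ separates node by node. The infimum is therefore attained by letting each node use its own optimal policy, and this yields the claimed structure.
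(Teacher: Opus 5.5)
The paper gives no argument of its own for this statement; it quotes it from \cite[Theorem 2]{nos_cor}. Your proposal has a genuine gap, located exactly at the step you call the main obstacle, and your proposed fix does not work.

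In $J^n_{h,k}$ only the first running-cost term is a barycentric combination of separate nodal quantities. The next state $\hat y_{n+1}=\sum_i\mu_i(x)z_i$, with $z_i=x_i+hf(x_i,u^{i,n}_n,t_n)$, couples the controls of all nodes. Everything downstream depends nonlinearly on $\hat y_{n+1}$: the weights $\mu_l(\hat y_j)$ for $j>n$, the terminal value $I_kg(\hat y_N)$, and hence $w^{n+1}_{h,k}(\hat y_{n+1})$. So the infimum does not separate node by node.

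Each of your two inequalities fails as a result:
\begin{itemize}
\item Your ``$\le$'' step needs $w^{n+1}_{h,k}\bigl(\sum_i\mu_i(x)z_i\bigr)=\sum_i\mu_i(x)w^{n+1}_{h,k}(z_i)$. This fails as soon as the $z_i$ lie in different simplices.
\item Your ``$\ge$'' step needs $w^{n+1}_{h,k}\bigl(\sum_i\mu_i z_i\bigr)\ge\sum_i\mu_i w^{n+1}_{h,k}(z_i)$ for every choice of controls. That is a concavity property, which a piecewise affine function need not have.
\end{itemize}
You also take $w^n_{h,k}\in V^k$ as an input. Membership of the infimum \eqref{eq:cha} in $V^k$ is not automatic, and the paper deduces it from this very theorem, so that step is circular.

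In fact, no argument can close this gap for the statement as formulated. Take the following data:
\begin{itemize}
\item $d=1$, mesh nodes $\ldots,0,k,\ldots$, and $U=[-1,1]$;
\item $f(x,u,s)=u$ near $[0,k]$, cut off near $\partial\Omega$ so that \eqref{invariance} holds;
\item $L\equiv0$, $\lambda=0$, and $g(y)=|y|$ near $0$, so that $I_kg=g$ there;
\item $0<h<k$ and $n=N-1$.
\end{itemize}
For $x\in[0,k]$ you get $J^{N-1}_{h,k}(x,u^1,u^2)=\bigl|x+h\bigl((1-x/k)u^1+(x/k)u^2\bigr)\bigr|$.

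The nodal minimizers are unique: $u^1=0$ at $x_1=0$, and $u^2=-1$ at $x_2=k$. At $x=h$ they give $J^{N-1}_{h,k}=h(1-h/k)>0$. The choice $u^1=u^2=-1$ instead gives $0$, and it is the unique minimizer. Hence $w^{N-1}_{h,k}(x)=\max(0,x-h)$ on $[0,k]$. This function is not in $V^k$, and at $x=h$ it differs from $v^{N-1}_{h,k}(h)=h(1-h/k)$.

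The nodes gain by cooperating, which is exactly what a node-by-node decoupling ignores. The example is a single step ($j=n$), so your reading of $u^i_n$ as the level-$j$ minimizer $u^i_j$, though sensible, does not help. The example also satisfies (CA). The characterization itself would have to be reformulated before an inductive proof along your lines could succeed.
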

From the above theorem is clear that the interpolants defined in \eqref{interb} are always piecewise functions in $V^k$.  Then,
it is immediate to prove $w_{h,k}^n\in V^k$ which implies $w_{h,k}^n$ is the unique solution defined by \eqref{fully_discrete} i.e., $w_{h,k}^n=v_{h,k}^n$ and, as a consequence, gives a characterization of the fully discrete functional. This characterization is used in the present paper to prove the error bounds of the method.
\section{Error analysis of the method}
We follow the error analysis of \cite[Section 3.2]{nos}, based on the analysis in \cite{boba}, to prove convergence of the method arguing with piecewise constant controls.

Let us denote
$$
{\cal U}^{pc}=\left\{u\in {\cal U}\ \mid\ u(t)=u_l,\ t\in [t_l,t_{l+1}), \ 0\le l\le N-1\right\},
$$
with $u_l$ constant. Let us observe that we can consider the continuous problem for controls in ${\cal U}^{pc}$.

The following lemma follows the error analysis in \cite[Lemma 1]{nos}, see also \cite[Lemma 1.2, Chapter VI]{Bardi}.
Along the error analysis $C$ will represent a generic constant that is not always necessarily the same and that does not depend neither on $h$ nor in $k$.
\begin{lema}\label{lema1} Let $0\le n\le N-1$, $x\in {\Bbb R}^d\subset \overline \Omega$ and $J_{x,t_n}(u)$, $ J_{h,k}^n(x,\bu^1_n,\ldots,\bu^{n_s}_n)$
the functionals defined in \eqref{cost} and \eqref{eq:funcional_fd}-\eqref{eq:eulerb}, respectively. Assume  conditions \eqref{conti_f}, \eqref{conti_g}, \eqref{lip_f_1}, \eqref{lip_f_2}, \eqref{lip_L_1}, \eqref{lip_L_2} and \eqref{lip_g}   hold. Then
\begin{eqnarray}\label{eq_lim0_cons}
| J_{x,t_n}(u)- J_{h,k}^n(x,\bu^1_n,\ldots,\bu^{n_s}_n)|\le C(h+k),
\end{eqnarray}
where $u\in {\cal U}^{pc}$ and for $i=1,\ldots,n_s$, $\bu_n^i=\bu=\left\{u_n,u_{n+1},\ldots,u_{N-1}\right\}$ with $u_l=u(t),$ $t\in [t_l,t_{l+1})$, $l=n,\ldots,N-1$.
\end{lema}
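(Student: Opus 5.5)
The plan is to compare the continuous trajectory $y(s)=y(s,u)$ starting at $x$ at time $t_n$ with the fully discrete trajectory $\hat y_j$ from \eqref{eq:eulerb}. When all $\bu^i_n$ equal the same $\bu$, the interpolants reduce to $I_kf(x,u_j,\ldots,u_j,t_j)=\sum_i\mu_i(x)f(x_i,u_j,t_j)$, and similarly for $I_kL$. By \eqref{lip_f_1}, since $\|x-x_i\|_2\le k$ whenever $\mu_i(x)\neq 0$ (the $x_i$ are vertices of the simplex containing $x$),
\begin{equation*}
\|I_kf(x,u_j,\ldots,u_j,t_j)-f(x,u_j,t_j)\|_2\le L_f k,\qquad |I_kL(x,u_j,\ldots,u_j,t_j)-L(x,u_j,t_j)|\le L_L k,
\end{equation*}
and by \eqref{lip_g}, $|I_kg(x)-g(x)|\le L_g k$. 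Here the invariance condition \eqref{invariance} keeps all $\hat y_j$ in $\overline\Omega$, so the interpolants are well defined.

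The first step is to bound the trajectory error $e_j=\|y(t_j)-\hat y_j\|_2$. On $[t_j,t_{j+1})$ the control is the constant $u_j$, so
\begin{equation*}
y(t_{j+1})=y(t_j)+\int_{t_j}^{t_{j+1}}f(y(s),u_j,s)\,ds .
\end{equation*}
Using $\|y(s)-y(t_j)\|\le C M_f h$ (from \eqref{conti_f}) together with \eqref{lip_f_1}--\eqref{lip_f_2}, the local consistency error of Euler is $O(h^2)$. No regularity of $u$ in time is needed, because $u$ does not vary inside the subinterval; this is the point of working in ${\cal U}^{pc}$. Adding the interpolation error $hL_fk$ gives
\begin{equation*}
e_{j+1}\le (1+hL_f)e_j+C h^2+C hk ,\qquad e_n=0 .
\end{equation*}
The discrete Gronwall lemma then yields $e_j\le e^{L_f(T-t)}C(h+k)$ uniformly for $n\le j\le N$. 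This is the key step, and the one to handle carefully: the bound has to come out as $h+k$ and not $k/h$. That is achieved because the spatial error enters as $hk$ per step rather than through an interpolation of the value function.

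The second step is the cost comparison. I would split
\begin{equation*}
\int_{t_n}^T L e^{-\lambda(s-t_n)}\,ds-h\sum_j\delta_h^{j-n}I_kL(\hat y_j,\ldots)
\end{equation*}
interval by interval into three parts. The first is $\int_{t_j}^{t_{j+1}}\bigl[L(y(s),u_j,s)e^{-\lambda(s-t_n)}-L(y(t_j),u_j,t_j)e^{-\lambda(t_j-t_n)}\bigr]ds=O(h^2)$, using \eqref{lip_L_1}, \eqref{lip_L_2}, the boundedness of $L$, and the Lipschitz continuity of the exponential. The second is $h\bigl(e^{-\lambda(t_j-t_n)}-\delta_h^{j-n}\bigr)L(y(t_j),u_j,t_j)$, where $|e^{-\lambda(j-n)h}-(1-\lambda h)^{j-n}|\le C h$ uniformly in $j$ (the standard estimate, with $(j-n)h\le T-t$), so after summation with \eqref{conti_g} it contributes $O(h)$. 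The third is $h\delta_h^{j-n}\bigl[L(y(t_j),u_j,t_j)-I_kL(\hat y_j,\ldots)\bigr]$, bounded by $hL_L(e_j+k)$. Summing over the at most $N$ intervals gives $O(h+k)$.

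The final term is handled by writing $|g(y(T))-I_kg(\hat y_N)|\le L_g e_N+L_gk$, with the exponential factor bounded by $1$. Collecting the three contributions gives \eqref{eq_lim0_cons}, with $C$ depending only on $T-t$, $\lambda$, and the constants in \eqref{conti_f}--\eqref{lip_g}.
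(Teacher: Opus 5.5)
Your argument is correct. It has the same two-stage structure as the paper's: a Gronwall bound on the trajectory error, then a term-by-term comparison of the costs. The decomposition, however, is genuinely different.

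The paper embeds the iterates as the step function $\tilde y(s)=\hat y_{[s/h]}$ and splits the integrand into $f(y,\tau)-f(y,[\tau/h]h)$, $f(y)-I_kf(y)$ and $I_kf(y)-I_kf(\tilde y)$; the terms $X_{1,2}$, $X_{1,3}$ do the same for $L$. It then closes with the continuous Gronwall lemma. So the paper takes the interpolation error along the exact trajectory $y(s)$, and compares the two trajectories through the interpolant, which requires the bound $\|\nabla I_kf_j\|_2\le C\sqrt{d}L_f$.

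You work only at the grid times and route every comparison through $f(\hat y_j,u_j,t_j)$ and $L(\hat y_j,u_j,t_j)$. The Lipschitz continuity of $f$ and $L$ themselves handles $y(t_j)$ versus $\hat y_j$, and the interpolation error is needed only at the discrete iterates; a discrete Gronwall step then finishes.

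This is somewhat more economical in two ways:
\begin{itemize}
\item You avoid the gradient bound on $I_kf$, $I_kL$. Its constant depends on the shape regularity of the triangulation, and the identity $I_kf_j(y)-I_kf_j(\tilde y)=\nabla I_kf_j(\tilde y)\cdot(y-\tilde y)$ holds literally only when $y$ and $\tilde y$ lie in one simplex.
\item You never need the exact trajectory $y(s)$ to remain in $\overline\Omega$. The paper uses this without comment when it evaluates $I_kf(y(\tau))$ and $I_kL(y(s))$.
\end{itemize}
The paper's continuous-time form, in exchange, mirrors the infinite-horizon analysis it is adapting. Your estimate $|e^{-\lambda mh}-\delta_h^m|\le Ch$ and your local $O(h^2)$ quadrature errors are simply the discrete counterparts of $X_{1,1}$ and $X_{1,4}$.

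One minor caveat. Since $\hat y_{j+1}=\sum_i\mu_i(\hat y_j)(x_i+hf(x_i,u_j,t_j))$, condition \eqref{invariance} places $\hat y_{j+1}$ in the convex hull of points of $\overline\Omega$. That gives $\hat y_{j+1}\in\overline\Omega$ directly only when $\overline\Omega$ is convex. The paper takes this for granted as well.
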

\begin{proof}
 Let $y(s)$, $s\in[t_n,T]$, be the solution of \eqref{sys} and let us denote by $\tilde y(s)=\hat y_l$,
$l=[s/h]$ where $\hat y_l$ is the solution of \eqref{eq:eulerb}. Then, $\tilde y$ can be expressed as
$$
\tilde y(s)=x+\int_{t_n}^{[s/h]h}I_{k}f(\tilde y(\tau),u(\tau),[\tau/h]h)~d\tau.
$$
And,
\begin{eqnarray*}
y(s)-\tilde y(s)&=&\int_{t_n}^{[s/h]h}\left(f(y(\tau),u(\tau),\tau)-I_{k}f(\tilde y(\tau), u(\tau),[\tau/h]h)\right)~d\tau\\
&&\quad+\int_{[s/h]h}^s f(y(\tau),u(\tau),\tau)~d\tau.
\end{eqnarray*}
From the above equation, applying \eqref{conti_f}, we get
\begin{eqnarray}\label{eq:infty1}
\|y(s)-\tilde y(s)\|_\infty\le \int_{t_n}^{[s/h]h}\|f(y(\tau),u(\tau),\tau)-I_{k}f(\tilde y(\tau),\overline u(\tau),[\tau/h]h)\|_\infty~d\tau +M_f h.
\end{eqnarray}
Let us bound now the term inside the integral.
Adding and subtracting terms we get
\begin{align}\label{eq:decom}
\|f(y(\tau),u(\tau),\tau)&-I_{k}f(\tilde y(\tau), u(\tau),[\tau/h]h)\|_\infty\le\\
&
\quad\quad\, \|f(y(\tau),u(\tau),\tau)-f(y(\tau),u(\tau),[\tau/h]h)\|_\infty\nonumber\\
&\quad+
\|f(y(\tau), u(\tau),[\tau/h]h)-I_{k}f(y(\tau), u(\tau),[\tau/h]h)\|_\infty\quad
\nonumber\\
&\quad +\|I_{k}f(y(\tau), u(\tau),[\tau/h]h)-I_{k}f(\tilde y(\tau), u(\tau),[\tau/h]h)\|_\infty.\nonumber
\end{align}
For the first term on the right-hand side, applying \eqref{lip_f_2}, it is easy to prove that
\begin{eqnarray}
\label{eu_cua}
\int_{t_n}^{[s/h]h}\|f(y(\tau),u(\tau),\tau)-f(y(\tau),u(\tau),[\tau/h]h)\|_\infty d\tau\le C h,\quad C=(T-t_n)L_f.
\end{eqnarray}
To bound  the other terms on the right-hand side of \eqref{eq:decom} arguing, as in \cite{Alla_Falcone_Volkwein}, we observe that
for any $y\in  \overline \Omega$ there exists an index $l$ with $y\in \overline S_l\subset \overline \Omega$. Let
us denote by $J_l$ the index subset such that $x_i\in S_l$ for $i\in J_l$. Writing
$$
y=\sum_{i=1}^{n_S} \mu_i x_i,\quad 0\le \mu_i\le 1,\quad \sum_{i=1}^{n_S} \mu_i=1,
$$
it is clear that $\mu_i=0$ holds for any $i\not \in J_l$. Now, we observe that for any $u\in {\cal U}$ and
any time $s$,
applying \eqref{lip_f_1} we get for $1\le j\le d$
\begin{eqnarray}\label{needed}
|f_j(y,u,s)-I_{k}f_j(y,u,s)|&=&|\sum_{i=1}^{n_S}\mu_i f_j(y,u,s)-\sum_{i=1}^{n_S}\mu_i I_{k}f_j(x_i,u,s)|\nonumber\\
&=&|\sum_{i\in J_l}\mu_i(f_j(y,u,s)-f_j(x_i,u,s)|\nonumber\\
&\le&\sum_{i\in J_l}\mu_i L_f \|y-x_i\|_2\le L_f k,
\end{eqnarray}
where in the last inequality we have applied $\|y-x_i\|_2\le k$, for $i\in J_l$. From the above inequality we get for the second term
on the right-hand side of \eqref{eq:decom}
\begin{eqnarray}\label{eq:segundo}
\int_{t_n}^{[s/h]h}\|f(y(\tau),u(\tau),[\tau/h]h)-I_kf( y(\tau),\overline u(\tau),[\tau/h]h)\|_\infty\le C k.
\end{eqnarray}
For the third term on the right-hand side of \eqref{eq:decom} we observe that the difference of the interpolation operator evaluated at
two different points can be bounded in terms of the constant gradient of the interpolant in the element to which those points belong times
the difference of them, i.e.,
$$
I_{k}f_j(y,u,s)-I_{k}f_j(\tilde y,u,s)=\nabla I_{k} f_j(\tilde y,u,s)\cdot (y-\tilde y)\le \|\nabla I_{k} f_j(\tilde y,u,s)\|_2\|y-\tilde y\|_2.
$$
Moreover, $\nabla I_{k}f_j$ can be bounded in terms of the lipschitz constant of $f$, $L_f$, more precisely,  $\|\nabla I_{k} f_j(\tilde y,u,s)\|_2\le C\sqrt{d}L_f$. Then,
$$
|I_{k}f_j(y,u,s)-I_{k}f_j(\tilde y,u,s)|\le C L_f \sqrt{d}\|y-\tilde y\|_2.
$$
As a consequence, for
the third term on the right-hand side of \eqref{eq:decom} we get
\begin{eqnarray}\label{eq:tercero}
&&\int_{t_n}^{[s/h]h}\|I_{k}f(y(\tau),u(\tau),[s/h]h)-I_{k}f(\tilde y(\tau),u(\tau),[s/h]h)\|_\infty\le\\
&&\quad C \sqrt{d}L_f
\int_{t_n}^s\|y(\tau)-\tilde y(\tau)\|_2\le C {d}L_f\int_{t_n}^s\|y(\tau)-\tilde y(\tau)\|_\infty.\nonumber
\end{eqnarray}
From \eqref{eu_cua}, \eqref{eq:segundo} and \eqref{eq:tercero}  we get for
$\overline C=CdL_f$
\begin{eqnarray*}
\|y(s)-\tilde y(s)\|_\infty\le \overline C \int_{t_n}^s \|y(\tau)-\tilde y(\tau)\|_\infty~d\tau+ C(h+k).
\end{eqnarray*}
Applying Gronwall's lemma we obtain
\begin{eqnarray}\label{ymenosytilde}
\|y(s)-\tilde y(s)\|_\infty\le \frac{e^{\overline C s}}{\overline C}C(h+k)\le C(h+k),\quad s\in[t_n,T].
\end{eqnarray}
For simplicity, in the sequel we will denote by
$$
J_{h,k}^n(x,\bu)=J_{h,k}^n(x,\bu_n^1,\ldots,\bu_n^{n_s}).
$$
We decompose
\begin{eqnarray}\label{eq:tb}
| J_{x,t_n}(u)-J^n_{h,k}(x,\bu)|\le X_1+X_2,
\end{eqnarray}
with
\begin{eqnarray*}
X_1&=&\left|h\sum_{j=n}^{N-1} \delta_h^{j-n} I_{k}L(\hat y_j,u_j,t_j)-\int_0^T L(y(s),u(s),s)e^{-\lambda (s-t_n)}~ds\right|,\\
X_2&=&\left|(g(y(T))-I_kg(\tilde y(T))e^{-\lambda(T-t_n)}\right|.
\end{eqnarray*}
We start bounding the second term. Adding and subtracting terms and using \eqref{lip_g} 
\begin{eqnarray*}
\left|g(y(T))-I_kg(\tilde y(T)\right|&\le& \left|g(y(T))-g(\tilde y(T))\right|+\left|g(\tilde y(T))-I_kg(\tilde y(T))\right|\nonumber\\ &\le& L_g \|y(T)-\tilde y(T)\|_2+\left|g(\tilde y(T))-I_kg(\tilde y(T))\right|.
\end{eqnarray*}
To bound the first term on the right-hand side above we apply \eqref{ymenosytilde} to get
$$
 L_g\|y(T)-\tilde y(T)\|_2 \le C (k+h).
$$
To bound the second term we apply \cite{paper_lip} where it is proved that the error in the piece-wise linear interpolant of a Lipschitz function can be bounded in terms of the lipschitz constant of that function so that
$$
\left|g(\tilde y(T))-I_kg(\tilde y(T))\right|\le C(L_g)k.
$$
Then,
\begin{eqnarray}\label{X2}
X_2\le e^{-\lambda(T-t_n)}C(h+k).
\end{eqnarray}
To bound the first integral term we observe that
$$
X_1=\left|\int_{t_n}^{T}\left(I_{k}L(\tilde y(s), u(s),[s/h]h)\delta_h^{[s/h]}-L(y(s),u(s),s)e^{-\lambda s}\right)~ds\right|.
$$
Then, we can write
\begin{eqnarray}\label{cota_x1}
X_1&\le& X_{1,1}+X_{1,2}+X_{1,3}+X_{1,4}\\
&:=& \left|\int_{t_n}^{T} I_{k}L(\tilde y(s), u(s),[s/h]h)(\delta_h^{[s/h]-n}- e^{-\lambda (s-t_n)})~ds\right|\nonumber\\
&&\quad+\left|\int_{t_n}^{T} \left(I_{k}L(\tilde y(s), u(s),[s/h])-I_{k}L(y(s), u(s),[s/h])\right)e^{-\lambda (s-t_n)}~ds\right|
\nonumber\\
&&\quad+\left|\int_{t_n}^{T} \left(I_{k}L(y(s), u(s),[s/h]h)-L(y(s), u(s),[s/h]h)\right)e^{-\lambda (s-t_n)}~ds\right|\nonumber\\
&&\quad +\left|\int_{t_n}^{T} \left(L(y(s), u(s),[s/h]h)-L(y(s), u(s),s)\right)e^{-\lambda (s-t_n)}.~ds\right|\nonumber\end{eqnarray}
Now we bound the four terms on the right-hand side of \eqref{cota_x1}. To bound the first term we will apply  
$|I_{k}L(y,u,s)|\le |L(\cdot,u,s)|_\infty\le M_L$ (see \eqref{conti_g}) to obtain
\begin{eqnarray*}
X_{1,1}=M_L\int_{t_n}^{T} |\delta_h^{[s/h]-n}- e^{-\lambda (s-t_n)}|~ds.
\end{eqnarray*}
Now we write $\delta_h^{[s/h]-n}=e^{-\lambda \theta ([s/h]-n)h}$, for $\theta=-\log(\delta_h)/(\lambda h)$. Applying the mean value theorem to the function $e^{-\lambda (s-t_n)}$ and taking into account that since $([s/h]-n)h\le s-t_n\le ([s/h]-n)h+h$ then $|s-t_n\theta ([s/h]-n)h|\le (\theta-1)(T-t_n)+\theta h$
and that $\theta- 1=O(h)$ then we get
\begin{eqnarray}\label{cota_x1_1}
X_{1,1} \le M_L(T-t_n) \lambda((\theta-1)(T-t_n)+\theta h)\le C h.
\end{eqnarray}
To bound the next term we argue as in \eqref{eq:tercero} and use \eqref{ymenosytilde} to get
\begin{eqnarray*}
X_{1,2}\le{CL_L}\int_{t_n}^T  \|y(s)-\tilde y(s)\|_\infty
e^{-\lambda (s-t_n)}~ds\le C(h+k).
\end{eqnarray*}
For the third term, arguing as in \eqref{needed} we get
\begin{eqnarray}\label{cota_x1_3}
X_{1,3}\le L_L k \int_{t_n}^T e^{-\lambda (s-t_n)}~ds\le C k.
\end{eqnarray}
Finally the last term $X_{1,4}=O(h)$ applying \eqref{lip_L_2} and arguing as in \eqref{eu_cua}.
\end{proof}
Let us observe that for any $x\in \overline \Omega$
$$
|v_{h,k}^N(x)-v(x,T)|=|I_kg(x)-g(x)|\le Ck.
$$
In next theorem we bound the difference $|v_{h,k}^n(x)-v(x,t_n)|$ for $n=0,\ldots,N-1$.
As in \cite[Theorem 7]{nos}  for the proof we need to assume an additional convexity assumption, see \cite[(A4)]{boba},
\begin{itemize}
\item (CA) For every $y\in {\Bbb R}^d$, and any $s\in[t,T]$
\begin{eqnarray*}
\left\{f(y,u,s), L(y,u,s),\quad  u\in U\right\}
\end{eqnarray*}
is a convex subset of ${\Bbb R}^{d+1}$.
\end{itemize}
We also need to assume \eqref{lip_fu} and \eqref{lip_L_u} for the proof that can be found in the appendix.
\begin{Theorem}\label{th_4_cons}
Assume  conditions \eqref{conti_f}, \eqref{conti_g}, \eqref{lip_f_1}, \eqref{lip_f_2}, \eqref{lip_fu}, \eqref{lip_L_1}, \eqref{lip_L_2}, \eqref{lip_L_u}, \eqref{lip_g} and {\rm(CA)}  hold.
For $0\le n\le N-1$ and $x\in \overline \Omega$ there exist constants $C_1$ and $C_2$, independent of $h$ and $k$, such that the following bound holds 
\begin{equation}\label{cota_buena_cons}
|v_{h,k}^n(x)-v(x,t_n)|\le C_1h+C_2k.
\end{equation}
The constant $C_2$ depends linearly on
\begin{equation}\label{laLu}
L_u=\max \frac{\|u_n^i-u_n^j\|_2}{|x_i-x_j|},\quad 1\le i,j\le n_S,
\end{equation}
where $u_n^i$ are the arguments giving the minimum in $v_{h,k}^n(x_i)$ in \eqref{fully_discrete}.
\end{Theorem}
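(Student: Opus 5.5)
We prove the two one-sided inequalities separately, using the characterization $v_{h,k}^n=w_{h,k}^n$ from Theorem \ref{th:acla} and the consistency estimate of Lemma \ref{lema1}.

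\emph{Bound $v_{h,k}^n(x)-v(x,t_n)\le C(h+k)$.} The first step is to reduce the continuous problem to piecewise constant controls: define
$$v^{pc}(x,t_n)=\inf_{u\in{\cal U}^{pc}}J_{x,t_n}(u).$$
Following \cite[Section 3.2]{nos} and the argument of \cite{boba}, we will show that $v^{pc}(x,t_n)-v(x,t_n)\le Ch$. This is the step where (CA), \eqref{lip_fu} and \eqref{lip_L_u} enter. By the relaxation/convexity assumption, any measurable control can be replaced on each interval $[t_l,t_{l+1})$ by a constant control $u_l$ with
$$f(y(t_l),u_l,t_l)=\tfrac1h\int_{t_l}^{t_{l+1}}f(y(t_l),u(\tau),t_l)\,d\tau,$$
and similarly for $L$. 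The Lipschitz bounds \eqref{lip_f_1}, \eqref{lip_f_2} then give a local error of $O(h^2)$ per interval in both the trajectory and the cost, and Gronwall sums these to $O(h)$. Given the resulting near-optimal $u\in{\cal U}^{pc}$, take $\bu^i_n=\bu$ for every $i$ in \eqref{eq:funcional_fd}. By \eqref{eq:cha} and Lemma \ref{lema1},
$$v_{h,k}^n(x)\le J^n_{h,k}(x,\bu)\le J_{x,t_n}(u)+C(h+k)\le v(x,t_n)+C(h+k).$$

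\emph{Bound $v(x,t_n)-v_{h,k}^n(x)\le C(h+k)$.} Take the discrete optimal controls $u^i_j$ given by Theorem \ref{th:acla}, and let $\hat y_j$ be the corresponding discrete trajectory started at $x$. Define a single piecewise constant control
$$\bar u(s)=\sum_i\mu_i(\hat y_j)u^i_j,\quad s\in[t_j,t_{j+1}),$$
or, if $U$ is not convex, the control supplied by (CA) representing the convex combination $\sum_i\mu_i(\hat y_j)(f(x_i,u^i_j,t_j),L(x_i,u^i_j,t_j))$ evaluated at $\hat y_j$. Then compare $J^n_{h,k}(x,\cdot)$ with $J_{x,t_n}(\bar u)\ge v(x,t_n)$ by repeating the proof of Lemma \ref{lema1}.

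The only new term is the mismatch
$$I_kf(\hat y_j,u^1_j,\ldots,u^{n_s}_j,t_j)-f(\hat y_j,\bar u_j,t_j)=\sum_{i\in J_l}\mu_i\bigl(f(x_i,u^i_j,t_j)-f(\hat y_j,\bar u_j,t_j)\bigr),$$
with the analogous mismatch for $L$. We bound it using \eqref{lip_f_1} by $L_fk$, plus \eqref{lip_fu} applied to $\|u^i_j-\bar u_j\|_2\le\max_{i,i'\in J_l}\|u^i_j-u^{i'}_j\|_2\le L_u k$. This is exactly where the constant $L_u$ of \eqref{laLu} appears linearly in $C_2$. A Gronwall argument as in \eqref{ymenosytilde} then gives $J_{x,t_n}(\bar u)\le v_{h,k}^n(x)+C_1h+C_2k$.

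The main obstacle is the second direction. There the discrete minimizer uses different controls at different nodes, so it is not a control of the continuous problem, and making it one requires both the convexity (CA) and the discrete regularity $L_u$. The reduction to piecewise constant controls in the first direction is also delicate, since it must achieve an $O(h)$ error rather than $O(\sqrt h)$. I would first try the averaging argument of \cite{boba} with (CA) applied interval by interval.
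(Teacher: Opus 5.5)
Your proof is correct. The bound $v_{h,k}^n(x)-v(x,t_n)\le C(h+k)$ is obtained exactly as in the paper. (CA) replaces a (near-)optimal control, interval by interval, by a piecewise constant one at cost $O(h)$, and Lemma \ref{lema1} is then applied with the same control at every node. Only (CA) and the Lipschitz bounds in $x$ and $s$ are used there; \eqref{lip_fu} and \eqref{lip_L_u} enter only in the other direction, not where you attribute them.

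For $v(x,t_n)-v_{h,k}^n(x)$ you use the same control $\bar u_j=\sum_i\mu_i(\hat y_j)u^i_j$ as the paper, but you organize the estimate differently. The paper goes through the intermediate functional $J^n_{h,k}(x,\bu,\ldots,\bu)$. Lemma \ref{lema1} compares it with $J_{x,t_n}(u)$. The appendix then compares it with the optimal nodal functional, using an auxiliary trajectory $\hat z_j$, an interpolation estimate for the composite function $f(y,\overline u(y),t_l)$ with Lipschitz constant $L_f(1+L_u)$, and a second Gronwall argument.

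You instead compare the continuous trajectory under $\bar u$ directly with the optimal discrete trajectory $\hat y_j$. The nodal-control mismatch, evaluated at the same point $\hat y_j$, becomes an $O((1+L_u)k)$ source term. This needs a single Gronwall argument whose constant depends only on $L_f$, and it avoids the composite interpolant. It also makes transparent that $L_u$ enters only linearly. The paper's route buys modularity, since Lemma \ref{lema1} is reused verbatim in both directions. Because the nodal minimizers $u^i_j$ change with $j$, your $L_u$ must be the maximum of the ratios in \eqref{laLu} over all levels $j\ge n$, not only level $n$.

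Your fallback branch for nonconvex $U$ is worth more than you claim. Your justification via \eqref{lip_fu} and $\|u^i_j-\bar u_j\|_2\le L_uk$ does not apply there, because $\bar u_j$ is no longer an average of the $u^i_j$. It is also not needed. Use (CA) at $(\hat y_j,t_j)$ to choose $\bar u_j$ with $(f,L)(\hat y_j,\bar u_j,t_j)=\sum_i\mu_i(\hat y_j)(f,L)(\hat y_j,u^i_j,t_j)$. Then the mismatch is exactly $\sum_i\mu_i(\hat y_j)\bigl(f(x_i,u^i_j,t_j)-f(\hat y_j,u^i_j,t_j)\bigr)$, which is bounded by $L_fk$, and similarly by $L_Lk$ for $L$.

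Using this choice always gives the second direction, and hence the whole estimate, with constants independent of $L_u$. It also dispenses with convexity of $U$ (which the paper's averaged control tacitly requires), \eqref{lip_fu} and \eqref{lip_L_u}. This is stronger than the statement and removes the difficulty discussed in the remark following the theorem.
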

\begin{proof}
In view of \eqref{eq:cha}
let us denote by $\left\{\bu_n^1,\ldots, \bu_n^{n_s}\right\}$
a control giving the minimum
$$
v_{h,k}^n(x)= J_{h,k}^n(x, \bu_n^1,\ldots,\bu_n^{n_s}),
$$
with $\left\{u_n^1,\ldots,u_n^{n_s}\right\}$, as stated in the assumptions of the present theorem, the controls giving the minimum at the nodes.

Let us define $\bu=\left\{u_n,\ldots,u_{N-1}\right\}$ where for $l=n,\ldots,N-1$, 
and  $\hat y_l$ (defined by \eqref{eq:eulerb}) written as  
$\hat y_l=\sum_{j=1}^{n_s}{\mu_j(\hat y_l)}x_j$
then
$u_l=\sum_{j=1}^{n_s}{\mu_j(\hat y_l)} u_n^j$.
Let us denote by
$$
 J_{h,k}^n(x,\bu)= J_{h,k}^n(x,\bu,\ldots,\bu).
$$
Then
$$
v(x,t_n)-v_{h,k}^n(x)\le \left(J_{x,t_n}(u)- J_{h,k}^n(x,\bu)\right)+\left(J_{h,k}^n(x,\bu)-J_{h,k}^n(x,\bu_n^1,\ldots,\bu_n^{n_s})\right),
$$
where $u\in{\cal U}^{pc}$ such that $u(t)= u_l$, $t\in[t_l,t_{l+1})$, $n\le l\le N-1$.
 Applying Lemma \ref{lema1} to bound the first term and  applying standard interpolation arguments for the second one (see \eqref{prin_apen}  in the appendix), there exists a  positive constant   such that
\begin{equation}\label{puessi}
v(x,t_n)-v_{h,k}^n(x)\le C_1 h+C_2k.
\end{equation}
Now, we need to bound $v_{h,k}^n(x)-v(x,t_n)$. Let us denote by $\underline u \in U$ the control giving the minimum in \eqref{cost} for $t=t_n$.
so that
\begin{equation}\label{fal_0}
v(x,t_n)= J_{x,t_n}(\underline u)=\int_{t_n}^T L(y(s,u),\underline u(s),s)e^{-\lambda(s-t_n)}ds+g(y(T))e^{-\lambda(T-t_n)}
\end{equation}
The following argument uses the ideas from \cite[Appendix B]{boba} and \cite[Theorem 7]{nos}. 

For any $t_l$ we can write
\begin{equation}\label{tan}
y(t)=y(t_l)+\int_{t_l}^t f(y(s),\underline u(s),s)ds.
\end{equation}
Applying \eqref{conti_f}
\begin{equation}\label{obvi}
\|y(t)-y(t_l)\|_\infty \le M_f h,\quad t\in[t_l,t_{l+1}].
\end{equation}
Then, for any $t\in [t_l,t_{l+1}]$, using the above inequality and \eqref{lip_f_1}
 we obtain
$$
\left\|\int_{t_l}^t f(y(s),\underline u(s),s)-f(y(t_l),\underline u(s),s) ds\right\|_\infty\le \sqrt{d}L_f M_f h^2.
$$
As a consequence, we get
\begin{equation}\label{fal_1}
y(t)=y(t_l)+\int_{t_l}^t f(y(t_l),\underline u(s),s)ds+\epsilon_k,\quad \|\epsilon_l\|_\infty\le \sqrt{d} L_f M_f h^2.
\end{equation}
On the other hand, as in \cite[(B.6a), (B.6b)]{boba}, \cite[Theorem 7]{nos}, thanks to (CA), for any $l$, there exists $\underline u_l$ such that
\begin{eqnarray}
\int_{t_l}^{t_{l+1}}f(y(t_l),\underline u(s),t_l)ds=h f(y(t_l),\underline u_l,t_l)\label{mean_f}\\
\int_{t_l}^{t_{l+1}}L(y(t_l),\underline u(s),t_l)ds= h L(y(t_l),\underline u_l,t_l)\label{mean_g_i}.
\end{eqnarray}
Let us also observe that, applying \eqref{mean_f} and \eqref{lip_f_2}
\begin{eqnarray}\label{nueva_s1}
&&\int_{t_l}^{t_{l+1}}(f(y(t_l),\underline u(s),s)-f(y(t_l),\underline u_l,s))ds
\nonumber\\
&&=\int_{t_l}^{t_{l+1}}(f(y(t_l),\underline u(s),s)-f(y(t_l),\underline u(s),t_l))ds\nonumber\\
&&\quad+\int_{t_l}^{t_{l+1}}(f(y(t_l),\underline u(s),t_l)-f(y(t_l),\underline u_l,t_l))ds\nonumber\\
&&\quad+\int_{t_l}^{t_{l+1}}(f(y(t_l),\underline u_l,t_l)-f(y(t_l),\underline u_l,s))ds\le 2L_f h^2.
\end{eqnarray}
From \eqref{mean_g_i} and \eqref{conti_g}  we get
\begin{eqnarray}\label{mean_g}
&&\int_{t_l}^{t_{l+1}}(L(y(t_l),\underline u_l,s)-L(y(t_l),\underline u(s),s))e^{-\lambda (s-t_n)}ds\le \nonumber\\
&&\quad\int_{t_l}^{t_{l+1}}L(y(t_l),\underline u_l,s)(e^{-\lambda (s-t_n)}-e^{-\lambda (t_l-t_n)})ds
\nonumber\\
&&\quad+\int_{t_l}^{t_{l+1}}(L(y(t_l),\underline u_l,s)-L(y(t_l),\underline u(s),t_l)e^{-\lambda (t_l-t_n)}ds
\nonumber\\
&&\quad+\int_{t_l}^{t_{l+1}}L(y(t_l),\underline u(s),t_l)(e^{-\lambda (t_l-t_n)}-e^{-\lambda (s-t_n)})ds
\nonumber\\
&&\le 2\lambda M_Lh^2= C h^2,\ C=2\lambda M_L.
\end{eqnarray}
From \eqref{fal_1} and \eqref{nueva_s1}
\begin{eqnarray}\label{lauso}
y(t_{l+1})&=& y(t_l)+\int_{t_l}^{t_{l+1}} f(y(t_l),\underline u(s),s)ds+\epsilon_l=y(t_l)+\int_{t_l}^{t_{l+1}} f(y(t_l),\underline u_l,s)ds\nonumber\\
&&\quad +\int_{t_l}^{t_{l+1}} (f(y(t_l),\underline u(s),s)ds-f(y(t_l),\underline u_l,s))ds+\epsilon_l\le\nonumber\\
&&y(t_l)+\int_{t_l}^{t_{l+1}} f(y(t_l),\underline u_l,s)ds+\epsilon_l+2L_f h^2.
\end{eqnarray}
Let us denote by $y^{pc}$ the time-continuous trayectory solution with the same initial condition as $y$ associated
to the control $
\underline u^{pc}(t)=\underline u_l,\ \forall t\in[t_l,t_{l+1}),\  l=n,\ldots,N-1$.

Arguing as in \eqref{fal_1}  we get
\begin{equation}\label{fal_2}
y^{pc}(t)=y^{pc}(t_l)+\int_{t_l}^t f(y^{pc}(t_l),\underline u_l,s)ds+\epsilon_l,\quad \|\epsilon_l\|_\infty\le \sqrt{d}L_f M_f h^2.
\end{equation}
Subtracting \eqref{fal_2} from \eqref{lauso} (applied with $l-1$) and  using \eqref{lip_f_1} we obtain
\begin{eqnarray*}
\|y(t_l)-y^{pc}(t_l)\|_\infty&\le &\|y(t_{l-1})-y^{pc}(t_{l-1})\|_\infty+\sqrt{d}hL_f\|y(t_{l-1})-y^{pc}(t_{l-1})\|_\infty\\
&&\quad +2\|\epsilon_{l-1}\|_\infty+2L_f h^2\\
&\le&(1+ h\sqrt{d}L_f)\|y(t_{l-1})-y^{pc}(t_{l-1})\|_\infty+C h^2,
\end{eqnarray*}
where $C=2\sqrt{d}L_f M_f+2L_f$.
Since $y(y_n)=y^{pc}(t_n)=x$ by standard recursion we get
\begin{equation}\label{fal_3}
\|y(t_l)-y^{pc}(t_l)\|_\infty\le (1+ h\sqrt{d}L_f)^{N-n}C(N-n)h^2\le e^{\sqrt{d}h(N-n)L_f}C(N-n)h^2\le C h,\end{equation}
for $C=C T e^{\sqrt{d}TL_f}$.

Now, for the control $\underline u \in{\cal U}$ giving the minimum in \eqref{cost} for $t=t_n$ and for
$
\underline \bu=\left\{\underline u_n,\ldots \underline u_{N-1}\right\},
$
denoting by
$$
J_{h,k}^n(x,\underline \bu)= J_{h,k}^n(x,\underline \bu,\ldots,\underline \bu).
$$
we obtain
\begin{eqnarray*}
v_{h,k}^n(x)-v(x,t_n)&\le& J_{h,k}^n(y,\underline \bu)-J_{x,t_n}(\underline u)=
 J_{h,k}(y,\underline \bu)- J_{x,t_n}(\underline u^{pc})\\
 &&\quad+J_{x,t_n}(\underline u^{pc})- J_{x,t_n}(\underline u).
\end{eqnarray*}
The first term on the right-hand side above is bounded in Lemma \ref{lema1} so that
\begin{eqnarray*}
v_{h,k}^n(x)-v(x,t_n)\le C(h+k)+J_{x,t_n}(\underline u^{pc})- J_{x,t_n}(\underline u).
\end{eqnarray*}
To conclude we need to bound the second term.
We write
\begin{eqnarray}\label{fal_6}
J_{x,t_n}(\underline u^{pc})- J_{x,t_n}(\underline u)&=&
\int_{t_n}^T \left(L(y^{pc}(s),\underline u^{pc}(s),s)-L(y(s),\underline u(s),s)\right)e^{-\lambda (s-t_n)}ds\nonumber\\
&&\quad+e^{-\lambda(T-t_n)}(g(y^{pc}(T))-g(y(T)).
\end{eqnarray}
For the second term on the right-hand side above, applying \eqref{lip_g} we get
$$
|e^{-\lambda(T-t_n)}(g(y^{pc}(T))-g(y(T))|\le C L_g \|y^{pc}(T)-y(T)\|_2\le C h,
$$
where in the last inequality we have applied \eqref{fal_3} with $l=N$.
To conclude we will bound the first term on the right-hand side of \eqref{fal_6}.
We observe that
\begin{eqnarray*}
&&\int_{t_n}^T \left(L(y^{pc}(s),\underline u^{pc}(s),s)-L(y(s),\underline u(s),s)\right)e^{-\lambda (s-t_n)}ds=
\\
&&\quad \sum_{l=n}^{N-1}\int_{t_l}^{t_{l+1}}\left(L(y^{pc}(s),\underline u_l,s)-L(y(s),\underline u(s),s)\right)e^{-\lambda (s-t_n)}ds.
\end{eqnarray*}
Adding and subtracting terms we get
\begin{eqnarray*}
&&\int_{t_l}^{t_{l+1}}\left(L(y^{pc}(s),\underline u_l,s)-L(y(s),\underline u(s),s)\right)e^{-\lambda (s-t_n)}ds
=\\
&&\quad \int_{t_l}^{t_{l+1}}\left(L(y^{pc}(s),\underline u_l,s)-L(y(s),\underline u_l,s)\right)e^{-\lambda (s-t_n)}ds
\\
&&\quad +\int_{t_l}^{t_{l+1}}\left(L(y(s),\underline u_l,s)-L(y(t_l),\underline u_l,s)\right)e^{-\lambda (s-t_n)}ds\\
&&\quad +\int_{t_l}^{t_{l+1}}\left(L(y(t_l),\underline u_l,s)-L(y(t_l),\underline u(s),s)\right)e^{-\lambda (s-t_n)}ds
\\
 &&\quad +\int_{t_l}^{t_{l+1}}\left(L(y(t_l),\underline u(s),s)-L(y(s),\underline u(s),s)\right)e^{-\lambda (s-t_n)}ds
\end{eqnarray*}
Applying \eqref{lip_L_1} and \eqref{mean_g} we get
\begin{eqnarray*}
&&\int_{t_l}^{t_{l+1}}\left(L(y^{pc}(s),\underline u_l,s)-L(y(s),\underline u(s),s)\right)e^{-\lambda (s-t_n)}ds
\le\\
&&L_L\int_{t_l}^{t_l+1}(\|y^{pc}(s)-y(s)\|_2+2\|y(s)-y(t_l)\|_2)e^{-\lambda (s-t_n)}ds+C h^2.
\end{eqnarray*}
Taking into account the following decomposition
$$
\|y^{pc}(s)-y(s)\|_2\le \|y^{pc}(s)-y^{pc}(t_l)\|_2+\|y^{pc}(t_l)-y(t_l)\|_2+\|y(t_l)-y(s)\|_2,
$$
and applying \eqref{obvi}  (than can also be applied to $y^{pc}$)
and \eqref{fal_3}
we reach
\begin{eqnarray*}
\int_{t_l}^{t_{l+1}}\left(L(y^{pc}(s),\underline u_l,s)-L(y(s),\underline u(s),s)\right)e^{-\lambda (s-t_n)}ds\le C h^2.
\end{eqnarray*}
And then
\begin{eqnarray*}
\int_{t_n}^T \left(L(y^{pc}(s),\underline u^{pc}(s),s)-L(y(s),\underline u(s),s)\right)e^{-\lambda (s-t_n)}ds\le Ch.
\end{eqnarray*}
As a consequence, we finally obtain
\begin{eqnarray*}
v_{h,k}^n(x)-v(x,t_n)\le C(h+k),
\end{eqnarray*}
which finish the proof.
\end{proof}
\begin{remark}
We observe that the term $h^{1/(1+\beta)}$ in \cite[Theorem 7]{nos} (and, as a consequence, the reduction of the error in time in the rate of convergence of the method) comes from having an infinite horizon problem. In the finite horizon case we handle in this paper there is no reduction in the rate of convergence under assumption {\rm(CA)}.

The linear dependence of the constant $C_2$ in \eqref{cota_buena_cons} on $L_u$ comes from the interpolation arguments in the appendix. It is clear that since we have a finite number of terms in \eqref{laLu} $L_u$ is always a finite constant for any $h$, $k$. However, we cannot prove that is independent on both $h$ and $k$. On the other hand, in practice one can always compute the value of $L_u$ in a numerical experiment. In case, $L_u$ behaves for example as $k^{-\alpha}$ with $0<\alpha<1$ we still achieve convergence of order $k^{1-\alpha}$ in space. It is also clear that to be able to obtain the full order (first order of convergence in time and space) we need a constant $L_u$ independent of $h$ and $k$. This assumption  is like a discrete Lipchitz-continuity condition on the nodal values of the computed discrete controls. As can be seen in the proof of the appendix, Lipschit-continuity of the functions is the minimal requirement to achieve the full order of convergence (first order) in the piece-wise linear interpolation, see \cite{paper_lip}.
\end{remark}
\appendix \label{app}

\section{Interpolation bounds}
\begin{lema}
For any $x\in  \overline \Omega$ 
let us denote by $\left\{\bu_n^1,\ldots,\bu_n^{n_s}\right\}$, $\bu\in {\cal U}$ the control giving the minimum
$$
v_{h,k}^n(x)= J_{h,k}^n(x, \bu_n^1,\ldots,\bu_n^{n_s}),
$$
with $\left\{ u^1_n,\ldots, u_n^{n_s}\right\}$ the controls giving the minimum at the nodes.

Let $\bu^j_n=\left\{u_n^{j,n},\ldots,u_n^{j,N-1}\right\}$, $1\le j\le n_S$.
Let us define $ \bu=\left\{ u_n,\ldots,u_{N-1}\right\}$ as follows. For  
  $\hat y_l$ (defined by \eqref{eq:eulerb}) written as
$\hat y_l=\sum_{j=1}^{n_s}{\mu_j(\hat y_l)}x_j$
then
$ u_l=\sum_{j=1}^{n_s}{\mu_j(\hat y_l)} u_n^j$.
Let us denote by
$$
J_{h,k}^n(x, \bu)=\hat J_{h,k}^n(x, \bu,\ldots, \bu),
$$
then
\begin{equation}\label{prin_apen}
\left|J_{h,k}^n(x, \bu)-\hat J_{h,k}^n(x, \bu^1_n,\ldots, \bu^{n_s}_n)\right|\le C k,
\end{equation}
where the constant $C$ depends linearly on $L_u$ in \eqref{laLu}.
\end{lema}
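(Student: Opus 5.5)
We compare two discrete trajectories started at the same point $x$. The first, $\hat y_j$, is driven by the optimal nodal controls: by Theorem~\ref{th:acla}, at every step $j$ the component $u_n^{i,j}$ equals the nodal minimiser $u_n^i$ whenever $\mu_i(\hat y_j)\neq 0$. The second, $\check y_j$, is driven by the single control sequence $\bu$ repeated in every slot, with $u_l=\sum_i\mu_i(\hat y_l)u_n^i$, so $u_l$ is computed from the first trajectory. The plan is a discrete Gronwall argument. At each step we show that the local discrepancy between the interpolated dynamics (and costs) is $O(k)$ with constant proportional to $L_u$. Summing $N-n$ such steps with weight $h$ then gives $O(k)$.

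\textbf{Step 1 (local consistency).} Fix $j$ and write $\hat y_j=\sum_{i\in J_l}\mu_i x_i$, with $J_l$ the vertices of the simplex containing $\hat y_j$. For $i\in J_l$,
\[
\|u_j-u_n^i\|_2\le\sum_{m\in J_l}\mu_m\|u_n^m-u_n^i\|_2\le L_u\max_{m\in J_l}\|x_m-x_i\|_2\le L_u k .
\]
By \eqref{lip_fu},
\[
\|I_kf(\hat y_j,u_j,\dots,u_j,t_j)-I_kf(\hat y_j,u_n^{1,j},\dots,u_n^{n_s,j},t_j)\|\le\sum_{i\in J_l}\mu_i L_f\|u_j-u_n^i\|_2\le L_fL_uk .
\]
By \eqref{lip_L_u}, the same estimate gives an $L_LL_uk$ bound for the difference of the $I_kL$ terms.

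\textbf{Step 2 (trajectory error).} Subtract the two Euler recursions \eqref{eq:eulerb}. We split
\[
I_kf(\check y_j,\bu)-I_kf(\hat y_j,\bu^\cdot)=\bigl[I_kf(\check y_j,\bu)-I_kf(\hat y_j,\bu)\bigr]+\bigl[I_kf(\hat y_j,\bu)-I_kf(\hat y_j,\bu^\cdot)\bigr].
\]
The first bracket is handled as in \eqref{eq:tercero}. With all slots carrying the same control $u_j$, $I_kf(\cdot,u_j,\dots,u_j,t_j)$ is the piecewise linear interpolant of the Lipschitz function $f(\cdot,u_j,t_j)$. Its gradient on each simplex is therefore bounded by $C\sqrt d L_f$, and the bracket is at most $C L_f\|\check y_j-\hat y_j\|$. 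The second bracket is bounded by Step 1. This yields
\[
e_{j+1}\le(1+ChL_f)e_j+ChL_fL_uk,\qquad e_n=0,
\]
where $e_j=\|\check y_j-\hat y_j\|_\infty$. Recursion gives $e_j\le Ce^{CL_f(T-t_n)}(T-t_n)L_uk=CL_uk$.

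\textbf{Step 3 (functionals).} The same two-term splitting applied to $h\sum_j\delta_h^{j-n}I_kL$ gives a bound
\[
h\sum_j\bigl(CL_L e_j+L_LL_uk\bigr)\le C(1+L_u)k .
\]
For the terminal term we use that $I_kg$ is Lipschitz with constant $CL_g$, so
\[
|I_kg(\check y_N)-I_kg(\hat y_N)|\le CL_ge_N\le CL_uk .
\]
Adding the two gives \eqref{prin_apen}.

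\textbf{Main obstacle.} The delicate point is the stability estimate in Step 2 when $\check y_j$ and $\hat y_j$ lie in different simplices. Writing the difference as "constant gradient times displacement" is then not literally valid. We would handle this by the same argument as \eqref{eq:tercero}: with the control frozen, $I_kf(\cdot,u_j,\dots,u_j,t_j)$ is a continuous piecewise linear function, hence globally Lipschitz with constant equal to the maximum of its elementwise gradient norms. Note also that the nodal control structure of Theorem~\ref{th:acla} is only needed along $\hat y_j$, never along $\check y_j$. This is why the control $u_l$ was defined from $\hat y_l$.
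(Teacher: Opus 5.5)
Your proof is correct and has the same skeleton as the paper's. Your two brackets in Step 2 are exactly the paper's terms $II$ (frozen control, two different points, controlled by the elementwise gradient of the interpolant of $f(\cdot,u,t_l)$) and $I$ (same point $\hat y_l$, nodal controls versus their barycentric average). Both proofs then take a Gronwall step (you use a discrete recursion, the paper the continuous lemma on piecewise-constant extensions) and treat $L$ and $g$ the same way, which the paper only sketches and you write out.

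The genuine difference is the bound for $I$, which is where $L_u$ enters. The paper introduces the piecewise linear $\overline u$ with $\overline u(x_i)=u_n^i$ and reads the first sum in $I$ as the interpolant of the composite map $y\mapsto f(y,\overline u(y),t_l)$. It then adds and subtracts the value $f(\hat y_l,\overline u(\hat y_l),t_l)$. The two resulting interpolation errors are bounded by the first-order interpolation estimate for Lipschitz functions, using that the composite map has Lipschitz constant $L_f(L_u+1)$ via \eqref{new_as}. You bypass the composite function: each vertex control of the simplex containing $\hat y_j$ lies within $L_uk$ of $u_j$, so \eqref{lip_fu} applied vertex by vertex gives $|I|\le L_fL_uk$ at once.

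Your route is more elementary and slightly sharper. It uses $L_u$ only along edges of a single simplex. By contrast, \eqref{new_as}, read as a global Lipschitz bound for $\overline u$ with constant exactly $L_u$, holds only up to a shape-regularity factor; on a flat simplex the gradient of $\overline u$ can far exceed the nodal difference quotients. Your bound also vanishes when all nodal controls coincide. In that case $I=0$, yet the paper's triangle inequality still leaves two nonzero $O(k)$ interpolation errors. What the paper's formulation buys is a direct link to its remark that Lipschitz regularity is the minimal requirement for first-order piecewise linear interpolation.

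As for your ``main obstacle'', you can avoid the gradient bound and the cross-simplex issue entirely. Add and subtract $f(\check y_j,u_j,t_j)$ and $f(\hat y_j,u_j,t_j)$, then use the pointwise estimate \eqref{needed}. This costs an extra $2L_fk$ per step, which is harmless in the recursion.
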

\begin{proof}
Following \eqref{eq:funcional_fd}-\eqref{eq:eulerb} we have
\begin{eqnarray*}
J^{1,n}_{h,k}(x)&:=&J_{h,k}^n(x,\bu^1_n,\ldots,\bu^{n_s}_n)\\
&&\quad=h\sum_{j=n}^{N-1} \delta_h^{j-n}I_k L(\hat y_j,u_n^{1,j},\ldots,u_n^{n_s,j},t_j)
+I_kg(\hat y_{N})e^{-\lambda(T-t_n)}, \ \\
\hat y_{j+1}&=&\hat y_j+h I_k f(\hat y_j,u_n^{1,j},\ldots,u_n^{n_s,j},t_j),\ \hat y_n=x,\ j=n,\ldots {N}-1.\quad\label{eq:euler}
\end{eqnarray*}
And,
\begin{eqnarray*}
\hat J_{h,k}^2:=\hat J_{h,k}^n(x, \bu)&:=&h\sum_{j=n}^{N-1} \delta_h^{j-n} 
I_k L(\hat z_j,I_k  u_j,t_j)+I_kg(\hat z_{N})e^{-\lambda(T-t_n)},\\
\hat  z_{j+1}&=&\hat z_j+h I_k f(\hat z_j,I_k  u_j,t_j),\ \hat z_n=x, \ j=n,\ldots {N}-1,
\end{eqnarray*} 
where, for $\hat z_j=\sum_{l=1}^{n_s}\mu_l(\hat z_j)x_l$ then
\begin{eqnarray*}
I_k f(\hat z_j,I_k  u_j,t_j)&=&\sum_{l=1}^{n_s}\mu_l(\hat z_j)f(x_l,\sum_{m=1}^{n_s}\mu_m(\hat y_j) u^m_n,t_j),\\
I_k L(\hat z_j,I_k  u_j,t_j)&=&\sum_{l=1}^{n_s}\mu_l(\hat z_j)L(x_l,\sum_{m=1}^{n_s}\mu_m(\hat y_j) u^m_n,t_j).
\end{eqnarray*}
Let
$\tilde y(s)=\hat y_l$  and let $\tilde z(s)=\hat z_l$, $l=[s/h]$. Let $ \tilde u^j(s)=u_n^{j,l}$, $\tilde u(s)=u_l$, $s\in[lh,(l+1)h)$. Then,
\begin{eqnarray*}
\tilde y(s)&=&x+\int_{t_n}^{[s/h]h}I_kf(\tilde y(\tau), \tilde u^1(\tau),\ldots, \tilde u^{n_s}(\tau),[\tau/h]h) d\tau,\\
\tilde z(s)&=&x+\int_{t_n}^{[s/h]h}I_kf(\tilde z(\tau),I_k \tilde u(\tau),[\tau/h]h) d\tau.
\end{eqnarray*}
We then obtain
\begin{eqnarray*}
&&\|\tilde y(t)-\tilde z(t)\|_\infty\le\\
&&\quad \int_{t_n}^{[s/h]h}\|I_kf(\tilde y(\tau), \tilde u^1(\tau),\ldots, \tilde u^{n_s}(\tau),[\tau/h]h)
-I_kf(\tilde z(\tau),I_k \tilde u(\tau),[\tau/h]h) \|_\infty d\tau.
\end{eqnarray*}
We have to bound the difference of the two polynomials. For any $s\in[lh,(l+1)h)$ we will bound
\begin{eqnarray}\label{inter_plus}
\sum_{j=1}^{n_s}\mu_j(\hat y_l)f(x_j, u_n^j,t_l)-\sum_{j=1}^{n_s}\mu_j(\hat z_l)f(x_j,\sum_{m=1}^{n_s}\mu_m(\hat y_k) u_n^m,t_l).
\end{eqnarray}
We decompose \eqref{inter_plus} in two terms
\begin{eqnarray*}
I&=&\sum_{j=1}^{n_s}\mu_j(\hat y_k)f(x_j, u_n^j,t_l)-\sum_{j=1}^{n_s}\mu_j(\hat y_k)f(x_j,\sum_{m=1}^{n_s}\mu_l(\hat y_k) u_n^m,t_l),
\\
II&=&\sum_{j=1}^{n_s}\mu_j(\hat y_k)f(x_j,\sum_{m=1}^{n_s}\mu_l(\hat y_k) u_n^m,t_l)-
\sum_{j=1}^{n_s}\mu_j(\hat z_k)f(x_j,\sum_{m=1}^{n_s}\mu_l(\hat y_k) u_n^m,t_l).
\end{eqnarray*}
Let $\overline u(y)$ be the piecewise linear function satisfying
$$
\overline u(x_i)=u_n^i, \quad i=1,\ldots,n_s.
$$
We observe that $I$ is the difference of two interpolants. The term on the left-hand side is the interpolant  $I_k g(y)$ that interpolates the function $g(y)=f(y,\overline u(y),t_l)$ at $x_i$ for $i=1,\ldots,n_s$. The term on the right-hand side
considers the funtion $f(y,\overline u(y),t_l)$ and interpolates only in the first argument. We will denote this interpolant
by $I_k f(y,\overline u(y),t_l)$ such that for $y=\sum_{j=1}^{n_s}\mu_j(y) x_j$
$$
I_k f(y,\overline u(y),t_l)=\sum_{j=1}^{n_s}\mu_j(y)f(x_j,\overline u(y),t_l).
$$
We decompose $I$ in two terms
\begin{eqnarray*}
|I|\le |I_1|+|I_2|:=|I_kg(\hat y_k)-g(\hat y_k)|+|f(\hat y_k,\overline u(\hat y_k),t)-I_kf(\hat y_k,\overline u(\hat y_k),t)|.
\end{eqnarray*}
The first term can be bounded as the interpolation error in the function $g$. Following \cite{paper_lip}
the error is $O(k)$ with a constant that depends on the Lipschitz constant of $g$. Let us observe that
applying \eqref{lip_f_1}, \eqref{lip_fu}
\begin{eqnarray*}
&&\|g(y_1)-g(y_2)\|_2=\|f(y_1,\overline u(y_1),t_l)-f(y_2,\overline u(y_2),t_l)\|_2\\
&&\quad\le
\|f(y_1,\overline u(y_1),t_l)-f(y_1,\overline u(y_2),t_l)\|_2\\
&&\quad\  +\|f(y_1,\overline u(y_2),t_l)-f(y_2,\overline u(y_2),t)|\|_2\\
&&\quad\le L_f\left( \|\overline u(y_1)-\overline u(y_2)\|_2
+\|y_1-y_2\|_2\right).
\end{eqnarray*}
Since
\begin{equation}\label{new_as}
\|\overline u(y_1)-\overline u(y_2)\|_2\le L_{ u}\|y_1-y_2\|_2,
\end{equation}
for $L_u$ defined in \eqref{laLu} then $g$ has Lipschitz constant $L_f(L_u+1)$ and then
$$
|I_1|\le C(L_f,L_f(L_{u}+1)) k.
$$
To bound the second term we also apply \cite{paper_lip} and since the second argument is fixed the bound depends only on the
Lipschitz constant of $f$ so that
$$
|I_2|\le C(L_f) k.
$$
To bound $II$ we observe that we have the difference of two interpolants in the $y$ variable, the second argument is fixed, i.e., we interpolate the function $g(y)=f(y,u)$. The difference of two interpolants can be bounded
in terms of the values at which we interpolate and the Lipschitz constant of the function $g$. Then
$$
|II|\le \overline C(L_f)\|\hat y_k-\hat z_k\|_\infty=\overline C(L_f)\|\tilde y(s)-\tilde z(s)\|_\infty.
$$
Then, we reach
$$
\|\tilde y(s)-\tilde z(s)\|_\infty\le  \overline C(L_f)\int_{t_n}^{[s/h]h}\|\tilde y(\tau)-\tilde z(\tau)\|_\infty d\tau +C(L_f,L_{ u}) (T-t_n) k,
$$ 
and arguing as in \eqref{ymenosytilde} we conclude 
\begin{equation}\label{lay}
\|\tilde y(t)-\tilde z(t)\|_\infty\le Ck.
\end{equation}
To conclude we only need to bound the difference of the two functionals which can be easily obtained
arguing as in Lemma \ref{lema1} and applying the same arguments of this appendix to $L$ instead of $f$.
\end{proof}

  \end{document}